\documentclass[12pt,a4paper]{article}
\usepackage{latexsym}
\usepackage{graphicx}
\DeclareSymbolFont{AMSb}{U}{msb}{m}{n}
\DeclareSymbolFontAlphabet{\mathbb}{AMSb}

\newcommand{\Nset}{\mathbb{N}}
\newcommand{\Qset}{\mathbb{Q}}
\newcommand{\Rset}{\mathbb{R}}
\newcommand{\Tset}{\mathbb{T}}
\newcommand{\Zset}{\mathbb{Z}}
\newcommand{\Cyl}{\mathcal{C}}

\newtheorem{thm}{Theorem}
\newtheorem{lem}[thm]{Lemma}
\newtheorem{remark}[thm]{Remark}
\newenvironment{rem}{\begin{remark} \rm}{\end{remark}}

\newcommand{\proof}{\noindent{\bf Proof. }\ignorespaces}
\newcommand{\proofof}[1]{\noindent{\bf Proof of #1. }\ignorespaces}
\newcommand{\qed}{\hfill$\Box$}
\newcommand{\mqed}{\quad\quad\Box}
\newcommand{\eqref}[1]{\textup{(\ref{#1})}}

\newcommand{\abs}[1]{\left\vert#1\right\vert}
\newcommand{\Or}[1]{\mathop\mathrm{O}\nolimits\left(#1\right)}
\newcommand{\Lnorm}[1]{\left\Vert#1\right\Vert_{L^2}}
\newcommand{\D }[1]{D#1}
\newcommand{\DD }[1]{D^2 #1}
\newcommand{\dc }[1]{\partial_c #1}

\newcommand{\Dc }[1]{\frac{\partial#1}{\partial c}}
\newcommand{\diff}{\,\mathrm{d}}
\newcommand{\LE}{\chi^+}
\newcommand{\e}{{\rm e}}
\newcommand{\ic}{\mskip2mu{\rm i}\mskip1mu}
\newcommand{\To}{\longrightarrow}

\title{A method of reduction for invariant curves of
       quasiperiodically forced maps}

\author{Amadeu Delshams\\
        Departament de Matem\`{a}tiques\\
        Universitat Polit\`{e}cnica de Catalunya\\
        Diagonal 647, 08028 Barcelona\\
        \texttt{Amadeu.Delshams@upc.edu}
   \and Rafael Ortega\\
        Departamento de Matem\'{a}tica Aplicada\\
        Facultad de Ciencias\\
        Universidad de Granada, 18071 Granada\\
        \texttt{rortega@ugr.es}}

\begin{document}

\maketitle

\begin{abstract}
The existence of translated curves for quasiperiodically forced maps is established, under very mild regularity hypotheses, for rotation numbers of constant type. Among the translated curves, the invariant curves are characterized as the solutions of a scalar bifurcation equation, from which their existence, stability as well as bifurcation can be easily described.
\end{abstract}

\section{Introduction}

This paper deals with the existence of invariant curves of skew-product
dynamical maps $(r_1,\theta_1)=f_\varepsilon(r,\theta)$ of the form
\begin{equation}
 \label{Eq:FE}
 \begin{array}{rcl@{\qquad}l}
 r_1&=&r+\varepsilon F(r,\theta; \varepsilon),& r\in\Rset\\
 \theta_1&=&\theta+2\pi\alpha,& \theta\in\Tset=\Rset /2\pi\Zset ,
 \end{array}
\end{equation}
defined on the cylinder $\Cyl:=\Rset \times\Tset$, where $\varepsilon$ is
a small parameter.

For an irrational frequency $\alpha$, such kind of maps are called
\emph{quasiperiodically forced maps} and arise in a number of well-known
examples.

As a motivation, let us mention the so-called 
\emph{quasiperiodically forced Arnold circle map}
 \begin{equation}
 \label{Eq:qpfAmap}
 \begin{array}{rcl@{\qquad}l}
 x_1&=&x+\omega+k\sin x+b\sin\theta,& x\in\Rset\, ,\\
 \theta_1&=&\theta+2\pi\alpha,& \theta\in\Tset=\Rset /2\pi\Zset\, ,
 \end{array}
 \end{equation}
 which has been studied by many authors as a model of quasiperiodically forced oscillators and as a paradigm for the investigation of strange non-chaotic attractors (SNA, see, for instance, 
 \cite{Jager09,FiguerasH12,Simo18} and references therein).

 This paper focuses on finding smooth and invariant invariant curves for map~(\ref{Eq:FE}) in a simple way, which will also allow us to address the problem of their existence and stability. In this direction, let us begin by noting that to find these invariant curves in the system above~(\ref{Eq:qpfAmap}),
 in \cite{GlendinningW99,GlendinningFPS00} the authors dealt with small $\omega$ and $k$, and thus introduced expansions in a small parameter $\varepsilon$
 \[
 k=\varepsilon,\quad
 \omega=\omega_1\varepsilon+\omega_2\varepsilon^2+\cdots,\quad
 b=b_0+b_1\varepsilon +b_2\varepsilon^2+\cdots,
 \]
and searched for invariant curves of the map above~(\ref{Eq:qpfAmap}) of the type
 \begin{equation}
 \label{Eq:expansion}
 x=G(\theta )=G_0
 (\theta )+G_1 (\theta )\varepsilon +G_2 (\theta )\varepsilon^2
 +\cdots,
 \end{equation}
where all functions $G_i$ are $2\pi$-periodic. After substituting
this expansion into the mapping~\eqref{Eq:qpfAmap}, the function $G_0$ can
be explicitly computed for $\alpha \not\in \Zset$. Namely,
 \begin{equation}\label{expl}
 G_0(\theta)=g_0
 -\frac{b_0}{2\sin(\pi\alpha)}\cos(\theta-\pi\alpha),
 \end{equation}
where $g_0$ is a constant to be adjusted later (in the computation of
$G_1$). The functions $G_1$, $G_2,\dots,$ can be determined in a recursive
way. They are solutions of linear functional equations of the type
 \begin{equation}
 \label{Eq:LinSmallDiv}
 G_n (\theta +2\pi \alpha )=G_n (\theta )+p_n (\theta),
 \end{equation}
where $p_n$ depends upon the parameters $\omega_1,\dots,\omega_n$,
$b_1,\dots,b_n$, and the previous functions $G_0 ,G_1,\dots,G_{n-1}$
(see~\cite{GlendinningW99} for more details of these formal computations).

The solutions to equation~(\ref{Eq:LinSmallDiv}) can be obtained using
Fourier analysis and, for $\alpha$ irrational, the condition
 \[
 \int_0^{2\pi} p_n (\theta)\diff\theta =0
 \]
becomes necessary and sufficient for their solvability. Sometimes, these
solutions are only formal because small divisors appear. To overcome this
objection one has to assume that $\alpha$ has good arithmetic properties.
However, even when $\alpha$ is the golden mean, it is not obvious how to
prove the convergence of expansion~\eqref{Eq:expansion}.

Before coming back to the general case~\eqref{Eq:FE}, we notice that
map~\eqref{Eq:qpfAmap} can also be written in this form, simply by performing
the change of variables $x=r+G_0 (\theta)$ where $G_0$ is given
by~\eqref{expl}. Indeed, under this change, map~\eqref{Eq:qpfAmap} becomes
\begin{equation}
 \label{Eq:FEOmegaIntro}
 \begin{array}{rcl}
 r_1&=&r+\varepsilon\left(\omega_1+
      \widetilde F (r,\theta;\varepsilon)\right),\\
 \theta_1&=&\theta+2\pi\alpha,
 \end{array}
\end{equation}
where $\widetilde F (r,\theta;\varepsilon)=\sin(r+G_0 (\theta))
+b_1\sin\theta+\Or{\varepsilon}$.

In this paper we propose a different approach for studying the existence
of invariant curves. First, we shall show that the cylinder $\Cyl$ can be
foliated by \emph{translated curves} of the map~\eqref{Eq:FE}.

By a translated curve, we understand a curve of the type $r=G(\theta )$
which is mapped under map~\eqref{Eq:FE} onto $r_1 =G(\theta_1 )-\lambda$,
for some $\lambda \in \Rset$, that is, $G(\theta)$ and $\lambda$ are
solutions of the equation
\begin{equation}\label{Eq:FuncEqIntro}
 G(\theta+2\pi\alpha)=G(\theta)
 +\varepsilon F\left(G(\theta),\theta;\varepsilon\right)+\lambda\, .
\end{equation}

Once we know that such a foliation exists, it is enough to discuss the
equation $\lambda =0$ among all translated curves to get invariant curves.

Let us be more concrete. We will assume that $F\in
C^{m}(\Cyl\times[0,1])$ in equation~\eqref{Eq:FE}, for $m\ge 3$. More
precisely, we assume that
\begin{equation}
 \label{Eq:boundF}
 \abs{\partial^{\ell_1}_\theta\partial^{\ell_2}_r
 \partial^{\ell_3}_\varepsilon
 F(r,\theta;\varepsilon)} \leq k_F^{(m)}
\end{equation}
for all $(r,\theta;\varepsilon)\in \Cyl\times [0,1]$,
$\ell=(\ell_1,\ell_2,\ell_3)\in\Nset^3$,
$\abs{\ell}:=\ell_1+\ell_2+\ell_3 \leq m$. We also assume that the frequency $\alpha$
has good arithmetic properties, to be precise, $\alpha$ is assumed to be of
\emph{constant type}; see~\eqref{Eq:ConsType}. Under these two hypotheses, our
main result (Theorem~\ref{Thm:Main}) is that for $\varepsilon$ small
enough ($\varepsilon\in[0,\varepsilon^*)$, with $\varepsilon^*$ depending
only on $k_F^{(3)}$ and $\alpha$)), there exists a foliation of $\Cyl$ by
translated curves of the map~\eqref{Eq:FE}. More precisely, there exists a
$C^{m-2}$ map $\psi: \Cyl\times [0,\varepsilon^*) \to \Rset$ of the form
$\psi(c,\varepsilon,\theta)=c+\Or{\varepsilon}$, such that, for every
$(c,\varepsilon)\in\Rset\times[0,\varepsilon^*)$,
$r=\psi(c,\theta,\varepsilon)$ is a translated curve of~\eqref{Eq:FE},
satisfying
\[
 c=\frac 1{2\pi}\int^{2\pi}_0\psi(c,\theta,\varepsilon)
 \diff \theta\, .
 \]
Moreover, for every fixed $\varepsilon\in[0,\varepsilon^*)$,
$(c,\theta)\mapsto (\psi(c,\theta), \theta)$ is a diffeomorfism in the
cylinder $\Cyl$ and therefore the family of translated curves
$r=\psi(c,\theta,\varepsilon)$ defines a foliation of the cylinder.

The most delicate aspect of the method presented here is the construction
of translated curves. To this end, instead of dealing with standard KAM
rapidly convergent iterations or invoking ``hard'' implicit function theorems in scaled spaces, we propose
a simple iteration scheme where a linear functional-differential equation is solved
at each step. This procedure is certainly more artificial than the method
proposed in \cite{GlendinningW99}, but the iteration scheme is very easy
to state and we can justify its convergence in a rigorous way.

Our approach is based on Herman's ideas in \cite{Herman82,Herman86}
and is also related to the papers of de la Llave~\cite{Llave83,Llave86}.

Once a lemma for the solution of a linear difference equation in a suitable Sobolev space and a suitable frequency $\alpha$ (Lemma~\ref{Lem:Herman}, due to Herman) is taken for granted, the proof
of Theorem~\ref{Thm:Main} is completely self-contained.
We note again that one could also
employ an approach based on Newton's Method as in KAM theory, and even
consider a multi-dimensional $\theta \in \Tset ^{d}$. This would relax
the conditions on $\alpha$ but it probably would require more regularity
on $F$ and, of course, this would complicate the proof a lot.

Once the existence of a foliation of translated curves
$r=\psi(c,\theta,\varepsilon)$ is proved, the existence of invariant
curves among them is reduced to searching zeros of the
``translation'' function $c\mapsto\lambda=\lambda(c,\varepsilon)$,
which has the form $\lambda(c,\varepsilon)= -\varepsilon
\Phi(c,\varepsilon)$, with
 \begin{equation}
 \label{Eq:BifFuncIntro}
 \Phi(c,\varepsilon)=\frac1{2\pi}\int^{2\pi}_0
 F(\psi(c,\theta,\varepsilon),\theta;\varepsilon) \diff \theta\, .
 \end{equation}
Equivalently, for $\varepsilon\in[0,\varepsilon^*)$, the invariant curves
of map~\eqref{Eq:FE} are \emph{precisely} given by
$r=\psi(c^*,\theta,\varepsilon)$ for every zero $c^*$ of the real
function $c\mapsto\Phi(c,\varepsilon)$.

Therefore, the search for invariant curves is reduced to finding the zeros
of $\Phi$. The reader who is familiar with the methods of bifurcation
theory will notice that this strategy is reminiscent of Lyapunov-Schmidt
and alternative methods (see~\cite{Hale80}). Actually,
equation~\eqref{Eq:FuncEqIntro} for the translated curves can be seen as
the \emph{auxiliary equation}, while $\Phi(c,\varepsilon)=0$ becomes the
\emph{bifurcation equation}.

From the study of the scalar equation $\Phi(c,\varepsilon)=0$, global and
local results about the existence of invariant curves can be easily
deduced. For example, each simple zero $c^*$ of $\Phi$ will give rise to a
hyperbolic invariant curve, attractor for $\dc\Phi(c^*,\varepsilon)<0$, or
repeller otherwise. More interestingly, multiple zeros of $\Phi$ give
rise to bifurcating invariant curves.

On the other hand, for a \emph{circle} map~\eqref{Eq:FE}, that is, for a
function $F$ periodic in the variable $r$, it turns out that $\Phi$ is
also a periodic function of $c$, and the equation $\Phi(c,\varepsilon)=0$
will typically have a finite number of zeros. For instance, for the
map~\eqref{Eq:FEOmegaIntro}, the bifurcation equation takes the form
$\omega_1+\widetilde \Phi(c,\varepsilon)=0$, with a $2\pi$-periodic in
$c$ function $\widetilde \Phi(c,\varepsilon)$ given by
\[
 \widetilde \Phi(c,\varepsilon)=\frac1{2\pi}\int^{2\pi}_0
 \widetilde F(\psi(c,\theta,\varepsilon),\theta;\varepsilon)
 \diff \theta\, .
\]
In particular, this implies that the set of $\omega_1$'s for which there
is at least one invariant curve, that is, $\Phi^{-1} (-\omega_1)$ is
non-empty, is a compact interval. This is a fact that seems to be given
for granted in~\cite{GlendinningW99}. Moreover, if
$[\omega_\ast,\omega^\ast]$ is such an interval, then there exist at
least two invariant curves if $\omega_1 \in (\omega_\ast,\omega^\ast)$.
In principle, the degenerate situation $\omega_\ast=\omega^\ast$ cannot
be excluded. In such a case, all translated curves would become invariant
simultaneously and the map~\eqref{Eq:FEOmegaIntro} should be conjugate to
$r_1=r$, $\theta_1=\theta+2\pi\alpha$.

Finally, one can also obtain computable conditions for the existence of
invariant curves of map~\eqref{Eq:FE}, by expanding
$\Phi(c,\varepsilon)=\Phi_0(c)+\varepsilon\Phi_1(c)+\cdots$, with
 \[
 \Phi_0(c):=\frac1{2\pi}\int^{2\pi}_0 F(c,\theta;0) \diff \theta\, ,
 \]
and looking for finite-order zeros of $\Phi_0$.

\section{Translated curves}
\label{Sec:translated}
The methodology based on proving the existence of translated curves to subsequently search for invariant curves was initiated by R\"ussmann~\cite{Russmann70}, later Hermann~\cite{Herman82,Herman86} implemented it for diffeomorphisms with frequency of constant type, and it has recently been used in other contexts by Gonz\'alez \emph{et al.}~\cite{GonzalezHL14},
Massetti~\cite{Massetti18} and Pello~\cite{Pello22}.

In our context, given a function $\psi\in C (\Tset )$, its graph will be
denoted by
\[
\Gamma=\Gamma_\psi=\{(\psi(\theta),\theta): \,\, \theta\in\Tset \}\, .
\]
Then a curve $r=\psi(\theta)$ is said to be a \emph{translated curve}
if there exists a number $\lambda$ (called the \emph{translation number}) such that
\[
f_\varepsilon(\Gamma)=\Gamma-(\lambda,0)\, .
\]
 \begin{figure}
 \centering
  \includegraphics[width=0.85\textwidth]{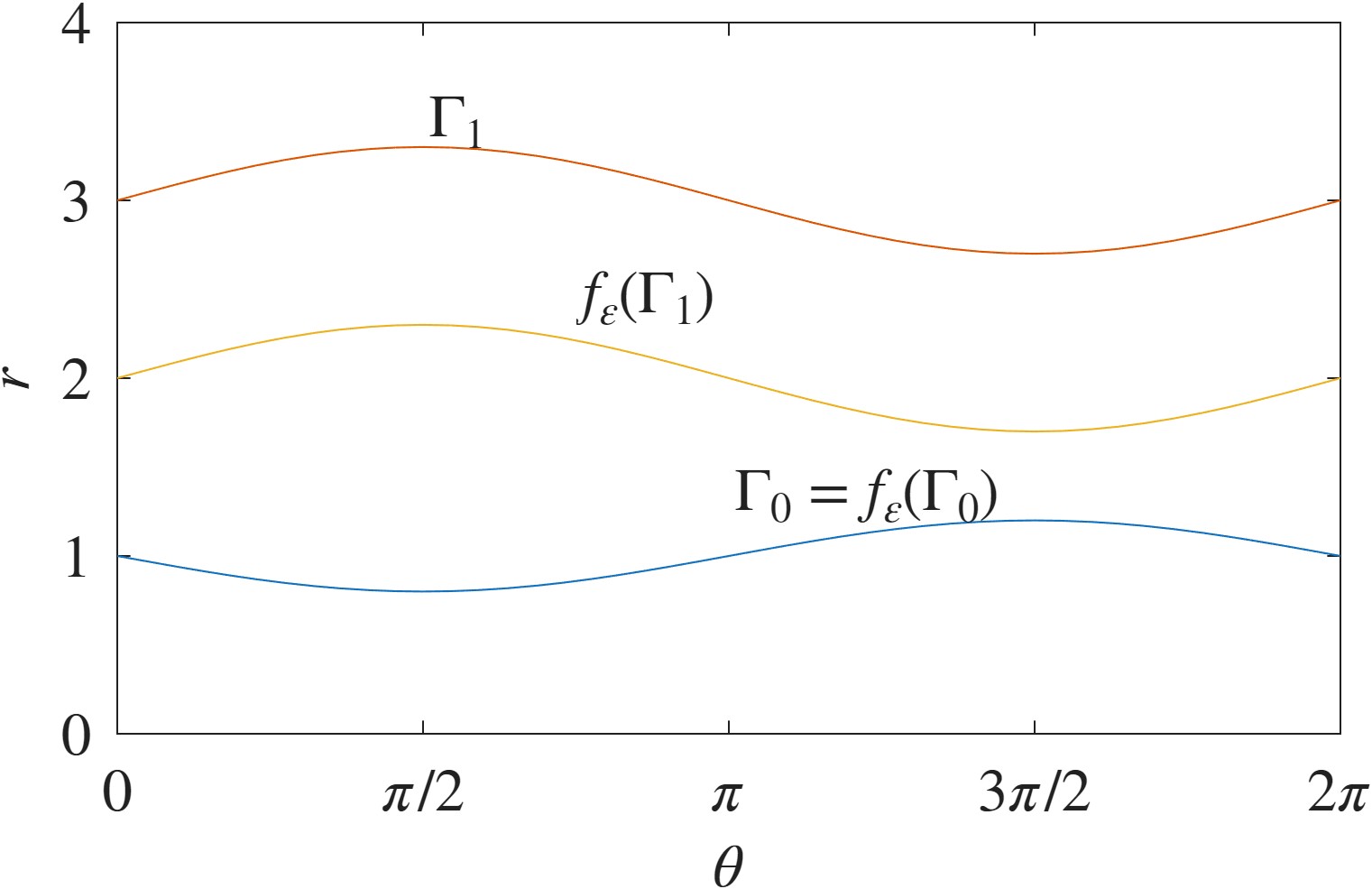}
 \caption{Representation of translated and invariant curves: $\Gamma_0$ is an invariant curve and $\Gamma_1$ a translated one.}
 \label{Fig:TransCurve}
 \end{figure}
 
For $\lambda=0$, the translated curve will be called \emph{invariant
curve} (see Figure~\ref{Fig:TransCurve}). For small $\varepsilon$,
$f_\varepsilon$ is a diffeomorphism and there is a one-to-one
correspondence between translated curves and continuous solutions of
the functional equation
 \begin{equation}\label{Eq:FuncEq}
 \psi(\theta+2\pi\alpha)=\psi(\theta)
 +\varepsilon F\left(\psi(\theta),\theta;\varepsilon\right)+\lambda\, .
 \end{equation}
Notice that the unknown is the couple
$(\psi,\lambda)\in C(\Tset )\times \Rset $.

When the function $F$ does not depend on $r$ and $\varepsilon$,
$F(r,\theta;\varepsilon )=p(\theta )$, equation~(\ref{Eq:FuncEq})
takes the form of one of the simplest and best known small
divisors problems: the linear equation
 \[
 \psi(\theta+2\pi\alpha)
 =\psi(\theta)+\varepsilon p(\theta)+\lambda \ ,
 \]
where $p$ is a given function. In this case the translation  number
must be $\lambda=-\varepsilon\bar p$, where
 $\displaystyle  \bar p:=1/(2\pi)\int^{2\pi}_0 p$, and it is well known (see, for
instance, \cite{KatokH95}) that one cannot guarantee the existence
of continuous solutions unless some conditions on the arithmetic of the frequency
$\alpha$ are imposed.

We shall assume that $\alpha$ is of \emph{constant type}; that is,
 \begin{equation}
 \label{Eq:ConsType}
 \exists \delta >0 :\abs{\alpha-p/q}\geq\delta/q^2,
 \qquad\forall\, p/q\in\Qset .
 \end{equation}
More information on these numbers can be seen in \cite{Herman82}.
They are characterized by having a bounded continued fraction. An
example of number of constant type is the golden mean
$(1+\sqrt{5})/2$, whose continued fraction contains only the
number one.

\begin{thm}
\label{Thm:Main} Assume $\alpha$ is of constant
type~\textup{(\ref{Eq:ConsType})}.  Then there exists
$\varepsilon^\ast=\varepsilon^\ast(\alpha,k_F^{(3)})>0$ such that for
$0\le\varepsilon<\varepsilon^\ast$ and $c\in\Rset $, then the map
$f_\varepsilon$ given in~\textup{(\ref{Eq:FE})} has a unique
translated curve $r=\psi_c(\theta)$, with $\psi_c\in H^2(\Tset )$
and $\Lnorm{ \DD \psi_c}\leq 1$, satisfying
 \[
 c=\frac 1{2\pi}\int^{2\pi}_0\psi_c(\theta) \diff \theta\, .
 \]

Moreover, the map $(c,\theta)\in\Rset\times\Tset\To
\psi_c(\theta)\in\Rset$ is $C^1$.
\end{thm}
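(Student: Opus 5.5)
Write $\psi=c+u$ with $\bar u=\frac1{2\pi}\int_0^{2\pi}u=0$. Equation~\eqref{Eq:FuncEq} then reads
\[
u(\theta+2\pi\alpha)-u(\theta)=\varepsilon F(c+u(\theta),\theta;\varepsilon)+\lambda .
\]
Averaging both sides forces
\[
\lambda=-\varepsilon\frac1{2\pi}\int_0^{2\pi}F(c+u,\theta;\varepsilon)\diff\theta ,
\]
so the unknown reduces to $u$ alone. I would use Herman's Lemma~\ref{Lem:Herman} for the linear difference operator $L u:=u(\cdot+2\pi\alpha)-u$ with $\alpha$ of constant type. For zero-mean $p\in H^1(\Tset)$ it gives a unique zero-mean solution of $Lu=p$ with $\Lnorm{\DD u}\le C_\alpha\Lnorm{\D p}$: constant type gives $\abs{\e^{\ic k2\pi\alpha}-1}\ge c_\delta/\abs{k}$, so the operator loses exactly one derivative.

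\textbf{Loss-free formulation.} A naive Picard iteration $u_{n+1}=L^{-1}[\varepsilon(F(c+u_n)-\overline{F(c+u_n)})]$ does not close as stated. Indeed, $u_n\in H^2$ gives $F(c+u_n,\cdot)\in H^2$, hence $u_{n+1}\in H^3$, so regularity is actually gained; the difficulty is closing the contraction in $H^2$ norms. Differentiating the equation in $\theta$ gives
\[
L(u')=\varepsilon\bigl(\partial_rF\,u'+\partial_\theta F\bigr),
\]
and differentiating again gives $L(u'')=\varepsilon(\partial_rF\,u''+\cdots)$. Each term involves at most $u''$ together with derivatives of $F$ of order $\le3$, which is where $k_F^{(3)}$ enters. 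I would therefore iterate
\[
u_{n+1}=\varepsilon L^{-1}\Pi\bigl[F(c+u_n,\cdot;\varepsilon)\bigr],
\]
with $\Pi$ the projection onto zero mean, on the set
\[
B=\{u\in H^2:\ \bar u=0,\ \Lnorm{\DD u}\le1\}.
\]
The estimate needed is $\Lnorm{\DD u_{n+1}}\le C_\alpha\varepsilon\Lnorm{\D{[F(c+u_n)]}}$. Since $H^2\subset C^1$ on $\Tset$ with $\Vert u'\Vert_\infty\le C\Lnorm{u''}$, the quantity $\Lnorm{\D{[F(c+u_n)]}}\le k_F^{(1)}(1+\Vert u_n'\Vert)$ is bounded by a constant, so $\Phi(B)\subset B$ once $\varepsilon$ is small. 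This step is where I would actually use Herman's one-derivative loss: it is compensated because $F\circ u$ only needs to be $H^1$.

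\textbf{Contraction.} In the weaker norm I would show that $u\mapsto\varepsilon L^{-1}\Pi F(c+u)$ is a contraction in the $H^1$ norm (equivalently $\Lnorm{\D u}$ on zero-mean functions). The estimate is
\[
\Lnorm{\D{(u_{n+1}-v_{n+1})}}\le C_\alpha\varepsilon\Lnorm{F(c+u_n)-F(c+v_n)}_{H^1}\le C\varepsilon\, k_F^{(2)}\,\Vert u_n-v_n\Vert_{H^1},
\]
again using the $C^1$ bounds available on $B$; if needed one can instead measure $L^{-1}$ from $H^1$ to $H^2$ and use $H^1$ differences. Since $B$ is closed in $H^1$ (weak lower semicontinuity of $\Lnorm{\DD u}$), the fixed point exists in $B$ and is unique there. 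This gives existence and uniqueness, with $\varepsilon^*$ depending only on $C_\alpha$ and $k_F^{(3)}$.

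\textbf{Regularity in $c$.} This is the main obstacle, because the fixed-point map is not $C^1$ on $H^2$ in the naive sense. I would first show Lipschitz dependence on $c$ by the same contraction estimate. Next I would take the formal derivative $w=\dc u$, which solves the linear equation
\[
Lw=\varepsilon\Pi\bigl[\partial_rF(c+u)(1+w)\bigr].
\]
This equation is uniquely solvable in $H^1$ by the same contraction, since $\partial_rF$ has bounded $C^2$ norm. I would then prove that the difference quotients $(u_{c+h}-u_c)/h$ converge to $w$ in $H^1\subset C^0$, using $F\in C^3$ and uniform continuity. Continuity of $c\mapsto w_c$ in $C^0$, combined with continuity of $u_c$ in $C^1$ (obtained by interpolating the uniform $H^2$ bound against $H^1$ convergence), gives joint $C^1$ regularity of $(c,\theta)\mapsto\psi_c(\theta)$.
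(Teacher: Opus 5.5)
There is a genuine gap: your estimate $\Lnorm{D^2u}\le C_\alpha\Lnorm{Dp}$ for $Lu=p$, and the claim that $u_n\in H^2$ yields $u_{n+1}\in H^3$, get the small-divisor loss backwards. From $\hat u_k=\hat p_k/(e^{2\pi i k\alpha}-1)$ and $\abs{e^{2\pi i k\alpha}-1}\ge c_\delta/\abs{k}$ you get $\abs{\hat u_k}\le \abs{k}\abs{\hat p_k}/c_\delta$. That is, $\Lnorm{u}\le C\Lnorm{Dp}$, $\Lnorm{Du}\le C\Lnorm{D^2p}$ and $\Lnorm{D^2u}\le C\Lnorm{D^3p}$, which is also what Lemma~\ref{Lem:Herman} gives with $a\equiv1$. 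Your inequality would be a \emph{gain} of a derivative, and it is false. By Dirichlet, $\abs{e^{2\pi i q\alpha}-1}\le 2\pi/q$ for infinitely many $q$, so even $\Lnorm{Du}\le C\Lnorm{Dp}$ fails.

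This breaks every step of your scheme. For $u_n\in B$ you have $F(c+u_n,\cdot)\in H^2$, hence only $u_{n+1}\in H^1$; controlling $\Lnorm{D^2u_{n+1}}$ would need $\Lnorm{D^3[F(c+u_n)]}$, which involves $D^3u_n$. So $B$ is not mapped into itself. Your $H^1$-contraction estimate in fact requires the $H^2$ norm of $u_n-v_n$. The equation $Lw=\varepsilon\Pi[\partial_rF(c+u)(1+w)]$ for $\partial_c u$ cannot be solved by the same contraction either. Any iteration $u_{n+1}=L^{-1}(\mbox{nonlinear expression in }u_n)$ loses one derivative per step and closes in no fixed Sobolev space. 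This is the classical loss-of-derivatives obstruction that KAM schemes overcome with Newton's method.

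The missing idea is Herman's, which the paper uses: after differentiating, do \emph{not} leave $\varepsilon F_r\,D\psi$ on the right-hand side; absorb it into the linear operator as a variable multiplier. The paper iterates $D\psi_{n+1}(\theta+2\pi\alpha)=a_n(\theta)D\psi_{n+1}(\theta)+\varepsilon F_\theta(\psi_n(\theta),\theta;\varepsilon)+\nu_n$, with $a_n=1+\varepsilon F_r(\psi_n,\cdot;\varepsilon)$. The free constants $\nu_n$ are needed because this equation need not be solvable otherwise.

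Now the forcing involves $\psi_n$ but none of its derivatives, so it lies in $H^2$ whenever $\psi_n\in H^2$. The second part of Lemma~\ref{Lem:Herman} loses one derivative and gives $D\psi_{n+1}\in H^1$, i.e.\ $\psi_{n+1}\in H^2$ with $\Lnorm{D^2\psi_{n+1}}\le C^\ast\Lnorm{D^2p}\le\varepsilon K$. The loss is paid because you solve for $D\psi$ rather than $\psi$.

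This hinges on Herman's lemma for a non-constant $a\in H^2$ with $\Lnorm{D^2a}\le\Delta$. That condition is where $\Lnorm{D^2\psi_n}\le1$ and $k_F^{(3)}$ enter. The lemma cannot be recovered by treating $(a-1)\varphi$ as a perturbation of $L$, since that reintroduces the loss.

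With this in place, the contraction is in $\Lnorm{D\psi_{n+1}-D\psi_n}$, via the first part of the lemma. Likewise, $\partial_c\psi$ comes from the variable-coefficient equation $\delta(\theta+2\pi\alpha)-a(\theta)\delta(\theta)=\nu$ with $a=1+\varepsilon F_r(\psi_c,\cdot;\varepsilon)$, not from a constant-coefficient fixed point.
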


\begin{rem}\label{sobolev}
$H^2(\Tset )$ denotes the Sobolev space of functions $u$ on $\Tset $
having a weak second derivative $\DD u$ belonging to $L^2(\Tset )$.
If $u$ has Fourier expansion $\sum \widehat u_n\e ^{in\theta}$, this is
equivalent to saying that
 \[
 \Lnorm{\DD u}:=2\pi \sum \abs{n}^4\abs{\widehat u_n}^2<\infty\, .
 \]
In an analogous way we can define $H^1 (\Tset )$ as the space of
functions satisfying
 \[
 \Lnorm{\D u}:=2\pi \sum \abs{n}^2\abs{\widehat u_n}^2<\infty\, .
 \]
Next we list some well-known properties of Sobolev spaces
(see, for instance, \cite{Brezis83,Herman86}) which will be
employed throughout the paper.
\begin{enumerate}
\item[(i)] $H^2 (\Tset )$ is
contained in $C^1 (\Tset )$ and, for each $u\in H^2 (\Tset )$ with
zero mean value,
\[
\abs{u(\theta )}+\abs{\D u(\theta )}\leq C\Lnorm{\DD u}
\mbox{ for all }\theta \in \Rset .
\]
\item[(ii)]
Assume that $\{ u_n \}$ is a sequence in $H^1 (\Tset )$ such that
$\Lnorm{\D u_n }$ is bounded and $\Lnorm{u_n } \to 0$. Then $u_n
(\theta )\to 0$ uniformly in $\theta$.
\item[(iii)]
Let $\{ u_n \}$ be a sequence in $H^2 (\Tset )$ and
$u\in H^1 (\Tset )$. Assume that $u_n$ and $u$ have constant mean
value, say $\displaystyle{1\over 2\pi}\int_0^{2\pi} u_n ={1\over 2\pi}
\int_0^{2\pi} u=c$, and
 \[
 \Lnorm{\DD u_n } \leq \Delta, \quad \Lnorm{\D u_n -\D u} \to 0.
 \]
Then
$u\in H^2 (\Tset )$, $\Lnorm{\DD u } \leq \Delta$, and
 \[
 u_n (\theta )\to u(\theta ),\; \; \D u_n (\theta )\to \D u(\theta ),
 \mbox{ uniformly  in } \theta.
 \]
\end{enumerate}
\end{rem}

\begin{rem}
By property~(i), the function $\psi_c$ provided by
Theorem~\ref{Thm:Main} has a classical first order derivative. The
proof of Theorem~\ref{Thm:Main} will lead to the estimate
 \[ \label{Eq:Estimate} \Lnorm{\DD \psi_c}
 =\Or{\varepsilon} \quad\mbox{as}\quad \varepsilon\to 0\,.
 \]
Again, in view of property~(i) of the previous remark,
$\psi_c(\theta)=c+\Or{\varepsilon}$, $\D \psi
_c(\theta)=\Or{\varepsilon}$, uniformly in $\theta$.
\end{rem}
\begin{rem}
Although it seems possible to get theoretical estimates of the value of $\varepsilon^*(\alpha,k_F^{(3)})$ of Theorem~\ref{Thm:Main} from those made by Herman in \cite{Herman86}, such estimates are typically very unrealistic. Nevertheless, since the (constructive) Theorem~\ref{Thm:Main} relies on Sobolev spaces, the \emph{Sobolev criterion} (see \cite{CallejaC10,CallejaCL13, CallejaCGL24} and references therein) appears to be fully adequate to provide real estimates on the breakdown of translated curves.
\end{rem}
To prove Theorem~\ref{Thm:Main} we shall construct an iteration scheme which
will converge to $\psi_c$. To motivate this scheme we perform a
formal computation. Taking a derivative in (\ref{Eq:FuncEq}) we
are led to
 \begin{equation}\label{der}
 \D \psi(\theta+2\pi\alpha)
 =\Bigl(1+\varepsilon F_r(\psi(\theta),\theta;\varepsilon)\Bigr)
 \D \psi(\theta)+ \varepsilon F_\theta(\psi(\theta),\theta;\varepsilon)\, ,
 \end{equation}
where $F_r$, $F_\theta$ denote, respectively, the derivative of $F$
with respect to $r$, $\theta$.

We are looking for $2\pi$-periodic solutions of this
functional-differential equation and the advantage with respect to
the previous equation is that $\lambda$ has disappeared. To exploit
linearity as much as possible we will begin with
 \[
 \psi_0=c
 \]
and for $n\geq 0$, we shall use the iteration
 \begin{eqnarray}
 \D \psi_{n+1}(\theta+2\pi\alpha)
 &=&\Bigl(1+\varepsilon F_r(\psi_n(\theta),\theta;\varepsilon)\Bigr)
 \D \psi_{n+1}(\theta)\nonumber\\
 &&\quad + \varepsilon
 F_\theta(\psi_n(\theta),\theta ;\varepsilon)+\nu_n\, ,
 \label{Eq:DefPsin}\\
 \frac 1{2\pi}\int^{2\pi}_0\psi_{n+1}(\theta) \diff \theta&=&c\, ,\nonumber
 \end{eqnarray}
where $\nu_n\in\Rset $ must be determined step by step.

\begin{rem}
The introduction of the number $\nu_n$ seems rather artificial but
it is required, because if we let $\nu_n=0$ it might happen that the
scheme is not well defined for some $n$ (more on this point in
section~\ref{SubsecNun}). Anyway, the numbers $\nu_n$ will be
unimportant as $n\to\infty$. Actually, if $\psi_n$ converges in
$C^1(\Tset )$ to a function $\psi$ then, passing to the limit in the
scheme one would arrive at
 \[
 \D \Bigl(\psi(\theta+2\pi\alpha)-\psi(\theta)
 -\varepsilon F(\psi(\theta),\theta;\varepsilon)\Bigr)=\nu\, ,
 \]
where $\nu$ is any accumulation point of the bounded sequence
$\{\nu_n\}$. Since the function under derivation is periodic we
conclude that $\nu=0$ and so $\psi$ solves
equation~(\ref{Eq:FuncEq}) for some $\lambda \in \Rset$.
\end{rem}
To prove the convergence of $\{\psi_n\}$ we shall employ some
results by Herman on the linear difference equation
 \begin{equation}\label{Eq:LinDE}
 \varphi(\theta+2\pi\alpha)-a(\theta)\varphi(\theta)=p(\theta)+\nu\, ,
 \quad \int^{2\pi}_0\varphi (\theta)\diff \theta=0\, ,
 \end{equation}
where the unknowns are $\varphi$ and $\nu$.

We now state a lemma that is a reformulation of the results of
\cite[chapter VII, section 9]{Herman86}. There, the complete proof of this lemma is based on Herman's strategy, which consists of taking derivatives and logarithms in equation~(\ref{Eq:LinDE}), so that the compositions are simplified, in order to reduce it to a standard small divisor equation like equation~(\ref{Eq:LinSmallDiv}). Furthermore, all constants are computed explicitly.

\begin{lem}
\label{Lem:Herman}
Assume that $\alpha$ is of constant type~\textup{(\ref{Eq:ConsType})}
and let $a\in H^2(\Tset )$ with
 \[
 0<a_-\leq a(\theta)\leq a_+<\infty\, ,\quad\forall\, \theta\in\Tset \, .
 \]
Then, for each $p\in H^1(\Tset )$, there exists a unique solution
$(\varphi,\nu)\in L^2(\Tset )\times\Rset $ of~\textup{(\ref{Eq:LinDE})}
satisfying
 \[
 \Lnorm{\varphi}\leq C\Lnorm{\D p}\, .
 \]
Here $C$ only depends on $a_{\pm}$, $\alpha$ and $\Lnorm{\DD a}$.

Moreover, there exists a constant $\Delta=\Delta(a_{\pm},\alpha)>0$,
such that if
 \[
\Lnorm{\DD a}\leq \Delta\, ,
 \]
and $p\in H^2 (\mathbb{T})$ then $\varphi\in H^1(\Tset )$ and
 \[
 \Lnorm{\D \varphi}\leq C^\ast \Lnorm{\DD p}\, ,
 \]
with $C^\ast=C^\ast(a_{\pm},\alpha)$.
\end{lem}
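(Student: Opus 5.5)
The plan is to follow Herman's strategy: first reduce \eqref{Eq:LinDE} to a constant-coefficient equation by a multiplicative change of unknown, then solve that equation with Fourier series. The constant-type condition \eqref{Eq:ConsType} gives the small divisor bound $\abs{\e^{\ic n2\pi\alpha}-1}\geq c_\alpha/\abs{n}$ for $n\neq 0$. Throughout, write $T\theta=\theta+2\pi\alpha$.

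\textbf{Step 1 (reduction).} Since $a\geq a_->0$ and $a\in H^2$, we have $\log a\in H^2$, with $\Lnorm{\DD \log a}$ controlled by $a_\pm$ and $\Lnorm{\DD a}$. Let $\beta$ be the mean of $\log a$, and solve
\[
g(T\theta)-g(\theta)=\beta-\log a(\theta)
\]
by Fourier series. The divisor bound costs one derivative, so $g\in H^1\subset C(\Tset)$. Set $h=\e^{g}$; by property~(i) of Remark~\ref{sobolev}, $h$ is bounded above and below by constants depending only on $a_\pm$, $\alpha$ and $\Lnorm{\DD a}$. Put $\mu=\e^{\beta}\in[a_-,a_+]$ and $\varphi=h\,u$. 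Then $a\,h/(h\circ T)=\mu$, and \eqref{Eq:LinDE} becomes
\[
u(T\theta)-\mu\, u(\theta)=q(\theta),\qquad q=\frac{p+\nu}{h\circ T},\qquad \int_0^{2\pi} h\,u=0 .
\]

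\textbf{Step 2 (solving with constants uniform in $\mu$).} For every real $\mu>0$,
\[
\abs{\e^{\ic x}-\mu}^2=(1-\mu)^2+2\mu(1-\cos x)\geq \min(1,\mu)\abs{\e^{\ic x}-1}^2 .
\]
Hence the nonzero modes satisfy $\abs{\widehat u_n}\leq C\abs{n}\abs{\widehat q_n}$ uniformly in $\mu$, and the oscillating part $\tilde u$ obeys $\Lnorm{\tilde u}\leq C\Lnorm{\D q}$. The dangerous mode is $n=0$, where $(1-\mu)\widehat u_0=\widehat q_0$ and $\mu$ may be close to $1$. I would avoid dividing by $1-\mu$ by treating $t=\widehat u_0$ and $\nu$ as two unknowns. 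The conditions $\widehat q_0=(1-\mu)t$ and $\int h(t+\tilde u)=0$ form a $2\times 2$ linear system in $(t,\nu)$; note that $\tilde u$ depends affinely on $\nu$ through $\nu/(h\circ T)$. The key point to check is that its determinant stays away from $0$ uniformly in $\mu$. It should be comparable to $\mathrm{mean}(1/(h\circ T))\cdot\mathrm{mean}(h)$, up to a term of order $(1-\mu)$ times quantities bounded by $h^{\pm1}$, and both means are positive. This yields existence, uniqueness and $\Lnorm{\varphi}\leq C\Lnorm{\D p}$, using $\Lnorm{\D q}\leq C(\Lnorm{\D p}+\abs{\nu})$ and $\abs{\nu}\leq C\Lnorm{\D p}$. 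This uniform handling of the zero mode is the step I expect to be the main obstacle. If the direct determinant computation is messy, I would instead argue by compactness in $\mu\in[a_-,a_+]$, checking invertibility separately at $\mu=1$, where $\nu$ is fixed by $\widehat q_0=0$.

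\textbf{Step 3 ($H^1$ estimate).} First assume $a$ and $p$ are smooth, so that $\varphi$ is smooth. Differentiating \eqref{Eq:LinDE} gives
\[
\D\varphi(T\theta)-a\,\D\varphi=\D p+\varphi\,\D a ,
\]
which is an equation of the same type. Its right-hand side lies in $H^1$, and $\D\varphi$ has zero mean. By the uniqueness in Step 2 (the pair $(\D\varphi,0)$ solves it), the first part applies and gives
\[
\Lnorm{\D\varphi}\leq C\bigl(\Lnorm{\DD p}+\Lnorm{\D\varphi\,\D a}+\Lnorm{\varphi\,\DD a}\bigr).
\]
Because $\varphi$ has zero mean, $\abs{\varphi}_\infty\leq C\Lnorm{\D\varphi}$ by property~(i) of Remark~\ref{sobolev}; also $\abs{\D a}_\infty\leq C\Lnorm{\DD a}$. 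So the last two terms are at most $C\Delta\Lnorm{\D\varphi}$. Once $\Lnorm{\DD a}\leq\Delta$ we have a uniform bound on $C$, and choosing $\Delta$ small lets us absorb these terms, giving $\Lnorm{\D\varphi}\leq C^\ast\Lnorm{\DD p}$. Finally, for general $a$ and $p$ we approximate them by smooth functions and pass to the limit, using the uniform bounds and the $L^2$ continuity of the solution map from Step 2.
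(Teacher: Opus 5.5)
Your architecture (logarithmic conjugation of $a$ to the constant $\mu=\e^{\beta}$, Fourier solution with the $\mu$-uniform divisor bound, then differentiating and absorbing the $\D a$, $\DD a$ terms) is the Herman reduction the paper cites. Steps 1 and 3 are sound apart from a sign slip: to get $ah/(h\circ T)=\mu$ you need $g(T\theta)-g(\theta)=\log a(\theta)-\beta$, and with your sign you get $a^2/\mu$.

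The genuine gap is in Step 2, exactly where you flagged it. Write $\tilde u=\tilde u_p+\nu\tilde u_1$ and use bars for means. The determinant of your $2\times2$ system in $(t,\nu)$ is
\[
d=\overline{h}\;\overline{(1/h)}+(1-\mu)\,\overline{h\,\tilde u_1}.
\]
The second term is not a small perturbation. First, $\abs{1-\mu}$ can be as large as $\max(1-a_-,a_+-1)$. Second, $\tilde u_1$ is only bounded by $C\Lnorm{\D{(1/h)}}$, which in the first part of the lemma is controlled by $\Lnorm{\DD a}$ but is not small. So ``comparable to the product of the means'' does not show $d\neq0$ once $\mu$ is away from $1$.

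The compactness fallback does not repair this. Compactness only upgrades pointwise nonvanishing to a uniform bound, and you check nonvanishing only at $\mu=1$. For $\mu\neq1$, the statement $d\neq0$ is precisely the uniqueness assertion of the lemma and needs its own argument. Moreover, $\mu$ and $h$ are both determined by $a$, so the compactness would have to be run over $a$, in a topology where $d$ is continuous.

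The missing idea is positivity. For $\mu\neq1$, let $\varphi_1$ be the unique $L^2$ solution of $\varphi_1(T\theta)-a(\theta)\varphi_1(\theta)=1$. A direct computation in your conjugated variables gives $d=(1-\mu)\,\overline{\varphi_1}$. Since $a=\mu\,(h\circ T)/h>0$, the Neumann series give
\[
\varphi_1(\theta)=\sum_{j\geq0}\mu^{j}\frac{h(\theta)}{h(\theta-2\pi j\alpha)}\ (\mu<1),\qquad
\varphi_1(\theta)=-\sum_{j\geq1}\mu^{-j}\frac{h(\theta)}{h(\theta+2\pi j\alpha)}\ (\mu>1).
\]
In both cases $d\geq\min h/\max h$, a bound depending only on $a_\pm$, $\alpha$ and $\Lnorm{\DD a}$. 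At $\mu=1$ one has $d=\overline{h}\;\overline{(1/h)}\geq1$. With this lower bound, Cramer's rule gives $\abs{t}+\abs{\nu}\leq C\Lnorm{\D p}$, and uniqueness follows.

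One smaller point: $\D q$ contains the term $(p+\nu)\,(\D h)\circ T/(h\circ T)^2$. So your bound $\Lnorm{\D q}\leq C(\Lnorm{\D p}+\abs{\nu})$ first requires normalizing $p$ to have zero mean. This is harmless, since adding a constant to $p$ only shifts $\nu$.
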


\proofof{the existence of $\psi_c$ in Theorem~\ref{Thm:Main}}
As a first step we prove that, if $\varepsilon$ is small,
then $\psi_n$ as defined inductively
in~(\ref{Eq:DefPsin}) satisfy
$\psi_n\in H^2(\Tset )$ and
 \[
 \Lnorm{\DD \psi _n}\leq 1\, .
 \]

This is clear for $\psi_0$. By induction, let us assume it for
$\psi_n$ and let us prove it for $\psi_{n+1}$. We apply
lemma~\ref{Lem:Herman} with $a(\theta)=1+\varepsilon F_r(\theta,
\psi_n(\theta);\varepsilon)$ and $p(\theta) =\varepsilon
F_\theta(\theta,\psi_n(\theta);\varepsilon)$. Let us fix $a_-=1/2$,
$a_+=3/2$, and we assume $a_-\leq a\leq a_+$ and $\parallel
D^2a\parallel_{L^2}\leq \Delta$. This is achieved by restricting the
size of $\varepsilon$. Then equation~(\ref{Eq:LinDE}) has a unique
solution in $H^1(\Tset ) \times\Rset $ and we can construct
$\psi_{n+1}$ as the primitive of this solution. Thus $\psi_{n+1} \in
H^2(\Tset )$ and
 \[
 \Lnorm{\DD \psi _{n+1}}\leq C^\ast\Lnorm{\D^2 p} \leq\varepsilon
 K\, .
 \]
Since the constant $K$ only depends on $k_F^{(3)}$, we restrict again the
size of $\varepsilon$ so that $\Lnorm{\DD \psi _{n+1}}\leq 1$.

Next we are going to prove that
 \[
 \Lnorm{\D \psi _{n+1}-\D \psi _n} \leq K\varepsilon
 \Lnorm{\D \psi _n-\D \psi _{n-1}}\, , \quad n\geq 1\, ,
 \]
where $K$ is a fixed number. To do this we notice that
$\varphi:=D(\psi_{n+1}-\psi_n)$ and $\nu=\nu_n-\nu_{n-1}$ satisfy
equation~(\ref{Eq:LinDE}) with
 \begin{eqnarray*}
 a(\theta)&=&
 1+\varepsilon F_r(\theta,\psi_n(\theta);\varepsilon),\\
 p(\theta)&=&
 \varepsilon\Bigl(F_r(\theta,\psi_{n}(\theta);\varepsilon)
 - F_r(\theta,\psi_{n-1}(\theta);\varepsilon)\Bigr)\D \psi _n(\theta)\\
 &&\mbox{} +
 \varepsilon\bigl(F_\theta(\theta,\psi_{n}(\theta);\varepsilon)
 - F_\theta(\theta,\psi_{n-1}(\theta);\varepsilon)\Bigr)\, .
 \end{eqnarray*}
 From here we deduce that
 \[
 \Lnorm{\D \psi _{n+1}-\D \psi _n}\leq C\Lnorm{\D p}\leq
 K\varepsilon\Lnorm{\D \psi _{n}-\D \psi _{n-1}}\, .
 \]
Here we have used that $\D \psi _n(\theta)$ is uniformly bounded
since it can be controlled in terms of $\Lnorm{\DD \psi _n}\leq 1$
(this is a consequence of property~(i) of Remark~\ref{sobolev}), and
that
 \[
 \psi_{n}(\theta)-\psi_{n-1}(\theta)=
 \int^\theta_{\theta^\ast}D(\psi_n-\psi_{n-1})(\Theta) \, \diff \Theta,
 \]
where $\theta^\ast$ is such that
$\psi_{n}(\theta^\ast)-\psi_{n-1}(\theta^\ast)=0$
($\psi_{n}-\psi_{n-1}$ has zero mean).

The previous estimate implies that the series
 $\sum \Lnorm{\D \psi_{n+1} -\D \psi_n }$ is majorized by
 $\sum (K\varepsilon )^n <\infty$. This implies that there exists
$\psi \in H^1 (\Tset )$ with $\displaystyle{1\over 2\pi} \int_0^{2\pi} \psi =c$
and such that $\Lnorm{\D \psi_n -\D \psi }  \to 0$. We can now apply
property~(iii) of Remark~\ref{sobolev} to conclude that $\psi$ is in
$H^2 (\Tset )$ and $\Lnorm{\D^2 \psi } \leq \varepsilon K\leq 1$. We
notice that $\psi_n (\theta )$ and $\D \psi_n (\theta )$ converge to
$\psi (\theta )$ and $\D \psi (\theta )$ in a uniform sense. This
gives us the existence of a translated curve.

The uniqueness is obtained by a repetition of the previous arguments.
Actually, if $\psi$ and $\psi^\ast$ would produce translated
curves with the same average then $\D {(\psi - \psi^\ast)}$ would
be a solution again of a linear difference
equation~(\ref{Eq:LinDE}) and we could obtain an estimate of the
type
 \[
 \Lnorm{\D \psi-\D {\psi^\ast}}\leq K\varepsilon
 \Lnorm{\D \psi-\D {\psi^\ast}}\, ,
 \]
leading to $\psi=\psi^\ast$ for small $\varepsilon$.
\qed
\section{Dependence with respect to parameters}

First of all we notice that $\psi=\psi_c$ depends continuously with
respect to $c$. Indeed, we can use the strategy of the previous
proof to obtain an estimate of the type
 \begin{equation}\label{err}
 \Lnorm{\D \psi _{c_1}-\D \psi _{c_2}} \leq K\varepsilon
 \abs{c_1-c_2} ,
 \end{equation}
where $\psi_c$ is the translated curve with
$\displaystyle c=1/(2\pi)\int^{2\pi}_0\psi_c$. From here one can deduce that the
function $\psi=\psi(\theta,c)$ is Lipschitz-continuous and strictly
increasing in the variable $c$. This fact follows from the previous
estimate and the identity
 \[
 \psi_{c_1}(\theta)-\psi_{c_2}(\theta)=c_1-c_2
 +\int^\theta_{\theta^\ast}
 D(\psi_{c_1}-\psi_{c_2})(\Theta) \, \diff \Theta\, ,
 \]
where $\theta^\ast$ is a point for which
$\psi_{c_1}(\theta^\ast)-\psi_{c_2}(\theta^\ast)=c_1-c_2$. As a
consequence of the previous remarks we can say that the mapping
$(c,\theta)\longmapsto (\psi_c(\theta), \theta)$ is a homeomorphism
of the cylinder $\Cyl$ and that the family of translated curves
defines a foliation. In particular, notice that they do not
intersect.

The function $\psi$ depends on several parameters,
$\psi=\psi(\theta,c,\varepsilon)$, or even
$\psi=\psi(\theta,c,\varepsilon,\omega,b,\dots)$, if we go back to
the original mapping~(\ref{Eq:qpfAmap}). Next we show how to compute first order
derivatives. We shall analyze $\dc \psi$.

If we proceed in a purely formal way and take derivatives with
respect to $c$ in equation~(\ref{der}), then $\delta=\dc \psi$
should verify
 \begin{eqnarray}
 \D\delta (\theta+2\pi\alpha) &=&
 \Bigl(1+\varepsilon F_r(\psi_c(\theta),\theta;\varepsilon)\Bigr)
 \D \delta (\theta )
 + \varepsilon\Bigl(F_{rr}(\psi_c(\theta),\theta;\varepsilon)
 \D \psi _c(\theta)\nonumber\\
 &&\mbox{}+F_{\theta r}(\psi_c(\theta),\theta; \varepsilon)\Bigr)\delta(\theta),
 \label{Eq:Var}\\ \frac{1}{2\pi}\int^{2\pi}_0\delta(\theta) \diff
 \theta&=&1\, .\nonumber
 \end{eqnarray}
To be rigorous, we first show that (\ref{Eq:Var}) has a unique
solution in $H^1 (\Tset )$. Actually (\ref{Eq:Var}) can be
rewritten as
 \[
 \D \Bigl( \delta (\theta +2\pi \alpha )
 -(1+\varepsilon F_r (\psi_c(\theta),\theta; \varepsilon))
 \delta (\theta )\Bigr)=0.
 \]
This is equivalent to
 \[
 \delta (\theta +2\pi \alpha )
 -(1+\varepsilon F_r (\psi_c(\theta),\theta; \varepsilon))
 \delta (\theta )=\nu
 \]
for some $\nu \in \Rset$. Now the function $\varphi (\theta )=\delta
(\theta )-1$ is a solution of~(\ref{Eq:LinDE}) with $a=1+\varepsilon
F_r(\psi_c,\cdot ; \varepsilon )$ and $p=\varepsilon
F_{r}(\psi_c,\cdot ;\varepsilon ) $.
 From Lemma \ref{Lem:Herman} we deduce that $\delta \in H^1
(\Tset )$ is unique and
 \begin{equation}
 \label{otra} \Lnorm{\D \delta } =\Or{\varepsilon }.
 \end{equation}
Once we know that the candidate for derivative is well determined we
define, for a given $h\neq 0$,
 \[
 \Delta_h(\theta)=\frac{1}{h}\bigl(\psi_{c+h}(\theta)-\psi_c(\theta)\bigr)\,
 \]
and we must prove that $\Delta_h$ converges to $\delta$ as $h\to
0$. To do this we notice that the function $\rho_h =\Delta_h
-\delta$ has zero mean value and satisfies
 \begin{equation}\label{dde}
 \D \Bigl( \rho_h (\theta +2\pi \alpha )-(1+\varepsilon F_r
 (\psi_c(\theta),\theta; \varepsilon))\rho_h (\theta )\Bigr) =q_h
 (\theta )
 \end{equation}
where
\begin{eqnarray*}
 q_h &=&\varepsilon\left[\left(\!\frac{F_r (\psi_{c+h},\cdot;\varepsilon )
 -F_r(\psi_c,\cdot;\varepsilon)}{h}\!\right)\D \psi _{c+h}
 -F_{rr}(\psi_c,\cdot ;\varepsilon )\Delta_h \D \psi_c \right]\\
 &&\mbox{}+\varepsilon \left[
 \!\frac{F_\theta(\psi_{c+h},\cdot;\varepsilon)
 -F_\theta(\psi_c,\cdot ;\varepsilon)}{h}
 \!-F_{\theta r} (\psi_c,\cdot;\varepsilon )\Delta_h \right].
 \end{eqnarray*}
 
 From (\ref{err}) we deduce that
 \[
 \Lnorm{\D \psi_{c+h} -\D \psi_c } \leq K\varepsilon |h|.
 \]
In particular $\Lnorm{\D \Delta_h }$ is bounded independently of $h$.
This implies that $q_h$ satisfies $\Lnorm{q_h} \to 0$ as $h\to 0$.
By integrating (\ref{dde}) we get
 \[
 \rho_h (\theta +2\pi \alpha )-a(\theta )\rho_h (\theta )
 =p_h (\theta )+\nu_h,
 \]
with
$a(\theta)=1+\varepsilon F_r(\theta,\psi_{c}(\theta);\varepsilon)$,
where $p_h$ is the primitive of $q_h$ with zero mean value. Notice
that~(\ref{dde}) guarantees that $q_h$ has zero mean value.

Again lemma~\ref{Lem:Herman} implies that
 \[
 \Lnorm{\rho_h } \leq C\Lnorm{q_h } .
 \]
Thus, for small $\varepsilon$, $\Lnorm{\rho_h } \to 0$ and, since
 $\Lnorm{\D \rho_h }=\Lnorm{\D \Delta_h - \D \delta}$ is bounded,
independently of $h$, we can apply property~(ii) of
remark~\ref{sobolev} to conclude that $\rho_h (\theta )\to 0$
uniformly in $\theta$. This proves that $\partial_c \psi$ exists
everywhere and satisfies (\ref{Eq:Var}), and finishes the proof of
Theorem~\ref{Thm:Main}.

\section{Invariant curves: existence and stability}
\label{Sec:ExistStab}

Let $\lambda=\lambda_c$ be the translation number of the translated
curve $r=\psi_c(\theta)$. Invariant curves will appear for those
values of $c$ for which $\lambda=0$. In more analytic terms we
consider the function
 \begin{equation}
 \label{Eq:BifFunc}
 \Phi(c):=\frac1{2\pi}\int^{2\pi}_0
 F(\psi_c(\theta),\theta;\varepsilon) \diff \theta\, .
 \end{equation}
We know from Theorem~\ref{Thm:Main} that $\Phi$ is $C^1$. Since
$\lambda_c$ coincides with $-\varepsilon \Phi(c)$ we deduce that the
search of invariant curves is reduced to finding the zeros of
$\Phi$. 

\begin{rem}
Motivated by the quasiperiodically forced Arnold circle map~(\ref{Eq:qpfAmap}),
let us now consider the mapping
\begin{equation}
\label{Eq:FEOmega}
f_{\varepsilon,\omega_0,\omega_1}:\Cyl\To \Cyl\quad
\left\{\begin{array}{l}
r_1=r+\omega_0+\varepsilon\left(\omega_1+F(r,\theta;\varepsilon)\right)\\
\theta_1=\theta+2\pi\alpha\end{array}\right.
\end{equation}
where $F$ is as before and $\omega_0$, $\omega_1$ are real
parameters. For $\omega_0=0$ and $\varepsilon$ small but fixed, all
mappings $f_{\varepsilon,0,\omega_1}$ have the same family of
translated curves. Invariant curves will correspond to the roots of
$\Phi(c)+\omega_1=0$, with $\Phi$ given in
equation~(\ref{Eq:BifFunc}). If we now assume that $F$ is also
$2\pi$-periodic in $r$ (as in the quasiperiodically forced Arnold circle
map~(\ref{Eq:qpfAmap})), then $\Phi$ is $2\pi$-periodic in $c$. This implies that the
set of $\omega_1$'s for which there is at least one invariant curve,
that is, $\Phi^{-1} (-\omega_1)$ is non-empty, is a compact
interval. This is a fact that seems to be given for granted
in~\cite{GlendinningW99}. Moreover, if $[\omega_\ast,\omega^\ast]$
is such an interval, then there exist at least two invariant curves
if $\omega_1 \in (\omega_\ast,\omega^\ast)$. In principle, the
degenerate situation $\omega_\ast=\omega^\ast$ cannot be excluded.
In such a case, all translated curves would become invariant
simultaneously and the mapping $f_{\varepsilon,0,\omega^\ast}$
should be conjugate to $r_1=r$, $\theta_1=\theta+2\pi\alpha$.
\end{rem}

Next, we discuss the stability properties of invariant curves. Let
$r=\psi(\theta)$ be an invariant curve of the
mapping~(\ref{Eq:FE})
 \[
 f_{\varepsilon}:\Cyl\To \Cyl\quad
 \left\{\begin{array}{l}r_1=r+\varepsilon F(r,\theta;
 \varepsilon)\, ,\\ \theta_1=\theta+2\pi\alpha\, .\end{array}\right.
 \]
The change of variables $r=\xi+\psi(\theta)$ maps $r=\psi(\theta)$
onto $\xi=0$ and transforms the mapping~(\ref{Eq:FE}) to
 \[
 \begin{array}{l}
 \xi_1=\xi+\varepsilon\left(F(\xi+\psi(\theta),\theta;\varepsilon)
 -F(\theta,\psi(\theta);\varepsilon)\right)\, ,\\
 \theta_1=\theta+2\pi\alpha\, .\end{array}
 \]
The linearization around $\xi=0$ is
 \begin{equation}
 \label{Eq:VarEq}
 z_1=a(\theta;\varepsilon)z,\qquad
 \theta_1=\theta+2\pi\alpha ,
 \end{equation}
where
$a(\theta;\varepsilon)=1+\varepsilon F_r(\psi(\theta),\theta;\varepsilon)$.
The solution of the variational difference equation~(\ref{Eq:VarEq}) is
 \begin{equation}
 \label{Eq:SolVarEq}
 z_n=\left[\prod^{n-1}_{k=0}a(\theta+2\pi k\alpha;\varepsilon)\right]z_0\, .
 \end{equation}
In the terminology of the supplement of~\cite[page 660]{KatokH95},
this defines a cocycle over the
rotation $\theta_1=\theta+2\pi\alpha$. The Lyapunov exponent is defined as
 \[
 \LE(\theta)=\lim_{n\to\infty}\frac{1}{n}\sum^{n-1}_{k=0}
 \log a(\theta+2\pi k\alpha;\varepsilon)\, .
 \]
The Birkhoff Ergodic Theorem implies that this limit exists almost
everywhere and is independent of $\theta$; namely,
 \[
 \LE=\frac{1}{2\pi}\int^{2\pi}_0
 \log(1+\varepsilon F_r(\psi(\theta),\theta;\varepsilon)) \diff \theta\, .
 \]
Actually, it is possible to be more precise about the rate of
convergence, as shown by the following result.

\begin{lem}
 \label{Lem:Fourier} Let $f\in H^2(\Tset )$. Then
 \[
 \frac{1}{2\pi}\int^{2\pi}_0 f(\theta) \diff \theta
 =\frac{1}{n}\sum^{n-1}_{k=0}f(\theta+2\pi k\alpha)
 + \Or{\frac{1}{n}}
 \]
as $n\to \infty$, uniformly in $\theta$.
\end{lem}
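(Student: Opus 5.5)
The plan is to reduce the statement to the cohomological equation for the rotation and then use the arithmetic of $\alpha$ through the Denjoy--Koksma inequality. Write $\bar f=\frac{1}{2\pi}\int_0^{2\pi}f$ and $g=f-\bar f$. The claim is then that the Birkhoff sums $S_n g(\theta)=\sum_{k=0}^{n-1}g(\theta+2\pi k\alpha)$ are bounded uniformly in $n$ and $\theta$, which after dividing by $n$ is exactly the $\Or{1/n}$ estimate.

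The most direct route is to solve the coboundary equation
\[
u(\theta+2\pi\alpha)-u(\theta)=g(\theta)
\]
with $u$ continuous. This is the special case $a\equiv 1$ of equation~(\ref{Eq:LinDE}) in Lemma~\ref{Lem:Herman}; there $\nu=0$ is forced because $g$ has zero mean. Alternatively, I would solve it by hand with Fourier series. Writing $g=\sum_{m\neq 0}\widehat g_m\e^{\ic m\theta}$, one gets $\widehat u_m=\widehat g_m/(\e^{2\pi\ic m\alpha}-1)$. Since $\alpha$ is of constant type~(\ref{Eq:ConsType}), $\abs{\e^{2\pi\ic m\alpha}-1}\geq 4\,\mathrm{dist}(m\alpha,\Zset)\geq 4\delta/\abs{m}$. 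Hence $\abs{\widehat u_m}\leq \abs{m}\abs{\widehat g_m}/(4\delta)$. Cauchy--Schwarz then gives
\[
\sum_m\abs{\widehat u_m}\leq \frac{1}{4\delta}\Bigl(\sum_{m\ne0}\abs{m}^{-2}\Bigr)^{1/2}\Bigl(\sum_m \abs{m}^4\abs{\widehat g_m}^2\Bigr)^{1/2}<\infty,
\]
which uses precisely $f\in H^2(\Tset)$. So $u$ has an absolutely convergent Fourier series and is continuous, hence bounded.

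Telescoping then gives $S_n g(\theta)=u(\theta+2\pi n\alpha)-u(\theta)$, so $\abs{S_ng(\theta)}\leq 2\sup\abs{u}$ for all $n$ and $\theta$. Dividing by $n$ yields
\[
\frac1n\sum_{k=0}^{n-1}f(\theta+2\pi k\alpha)=\bar f+\Or{\frac1n}
\]
uniformly in $\theta$, with constant $2\sup\abs{u}\leq C(\delta)\Lnorm{\DD f}$.

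The only delicate step is the summability estimate on the small divisors. The $H^2$ hypothesis is chosen exactly so that one power of $\abs{m}$ is lost to the divisor and the remaining factor $\abs{m}^{-1}$, squared, is still summable.
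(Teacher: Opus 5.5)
Your proof is correct and is essentially the paper's argument. The paper bounds each geometric sum $\bigl|\sum_{k=0}^{n-1}\e^{2\pi\ic km\alpha}\bigr|\le 2/\abs{\e^{2\pi\ic m\alpha}-1}\le 2\mu\abs{m}$ and then uses the same Cauchy--Schwarz estimate $\sum_{m\ne0}\abs{\widehat f_m}\abs{m}<\infty$; this is just the coefficientwise form of your telescoping identity $S_ng(\theta)=u(\theta+2\pi n\alpha)-u(\theta)$ with $\widehat u_m=\widehat g_m/(\e^{2\pi\ic m\alpha}-1)$. The opening mention of Denjoy--Koksma is never used and can simply be dropped.
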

\begin{rem}
When the order of convergence does not need to be specified, this is a very classic lemma that appears, for example, in \cite[p.~369]{NemytskiiS60}, and just Riemann integrable $f$ and irrational $\alpha$ are required.
\end{rem}
    
\proof
Since $\alpha$ is of constant type~(\ref{Eq:ConsType}), one can obtain an
estimate of the type
 \[
 \abs{\e ^{2\pi \ic m \alpha}-1}\geq \frac{1}{\mu\abs{m}}\, ,
 \quad m\neq 0,
 \]
where $\mu >0$ only depends on $\alpha$. Thus, for $m\neq 0$,
trigonometric sums can be estimated as follows,
 \[
 \abs{\sum_{k=0}^{n-1} \e ^{2\pi \ic k m \alpha}}
 \leq \frac{2}{\abs{\e ^{2\pi \ic  m \alpha}-1}}
 \leq 2\mu \abs{m} .
 \]
Let $\sum \widehat f _m \e^{ \ic m\theta}$ be the Fourier expansion
of $f(\theta )$. To prove this lemma, one only has to check that
$\sum^{n-1}_{k=0}\left(f(\theta+2\pi k\alpha)-\widehat f_0\right)
<\infty$. In view of the previous estimates it is enough to check
that $\sum \abs{\widehat f _m}\abs{m}<\infty$. This series is
convergent because $f\in H^2 (\Tset )$ and
 \[
 \sum_{m\neq 0}\abs{\widehat f _m} \abs{m}
 \leq\left(\sum \abs{m}^{-2}\right)^{1/2}
 \left(\sum \abs{\widehat f _m}^2 \abs{m}^4 \right)^{1/2}
 <\infty .\] 
 Note that $$\sum^{n-1}_{k=0}\left(f(\theta+2\pi k\alpha)-\widehat f_0\right)=\sum_{m\neq 0} \widehat f_m e^{\ic m\theta} \sum_{k=0}^{n-1} e^{2\pi \ic km\alpha}.\mqed$$
Our invariant curve $r=\psi(\theta)$ is in $H^2(\Tset )$ and so is
$a(\theta;\varepsilon)$ in equation~(\ref{Eq:VarEq}). Thus
lemma~\ref{Lem:Fourier} applies to $\log a$ and one obtains the following
estimate
 \[
 \abs{z_n}\leq Ke^{n\LE} \abs{z_0}\, ,\quad n\geq 1\, ,
 \]
for the solution~(\ref{Eq:SolVarEq}) of the variational difference
equation~(\ref{Eq:VarEq}).

It is now easy to prove that if $\LE<0$ then $\xi=0$ is a local
attractor for the nonlinear map. Actually, one uses standard
arguments in linearization. Analogously, $\xi=0$ is a repeller if
$\LE> 0$. The case $\LE=0$ is not decided by the first approximation.

\begin{figure}[ht]
 \includegraphics[width=0.49\textwidth]{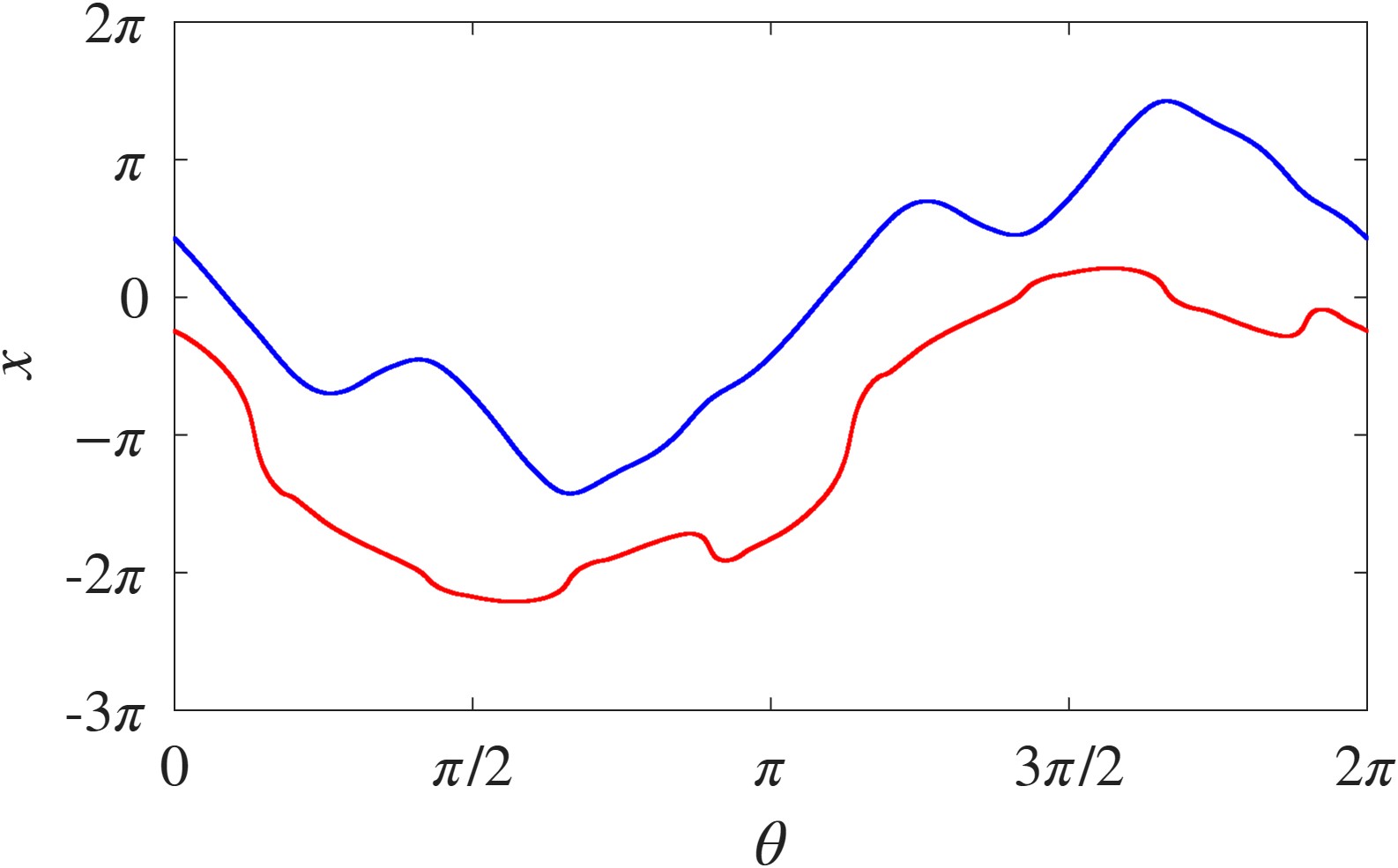}
  \includegraphics[width=0.49\textwidth]{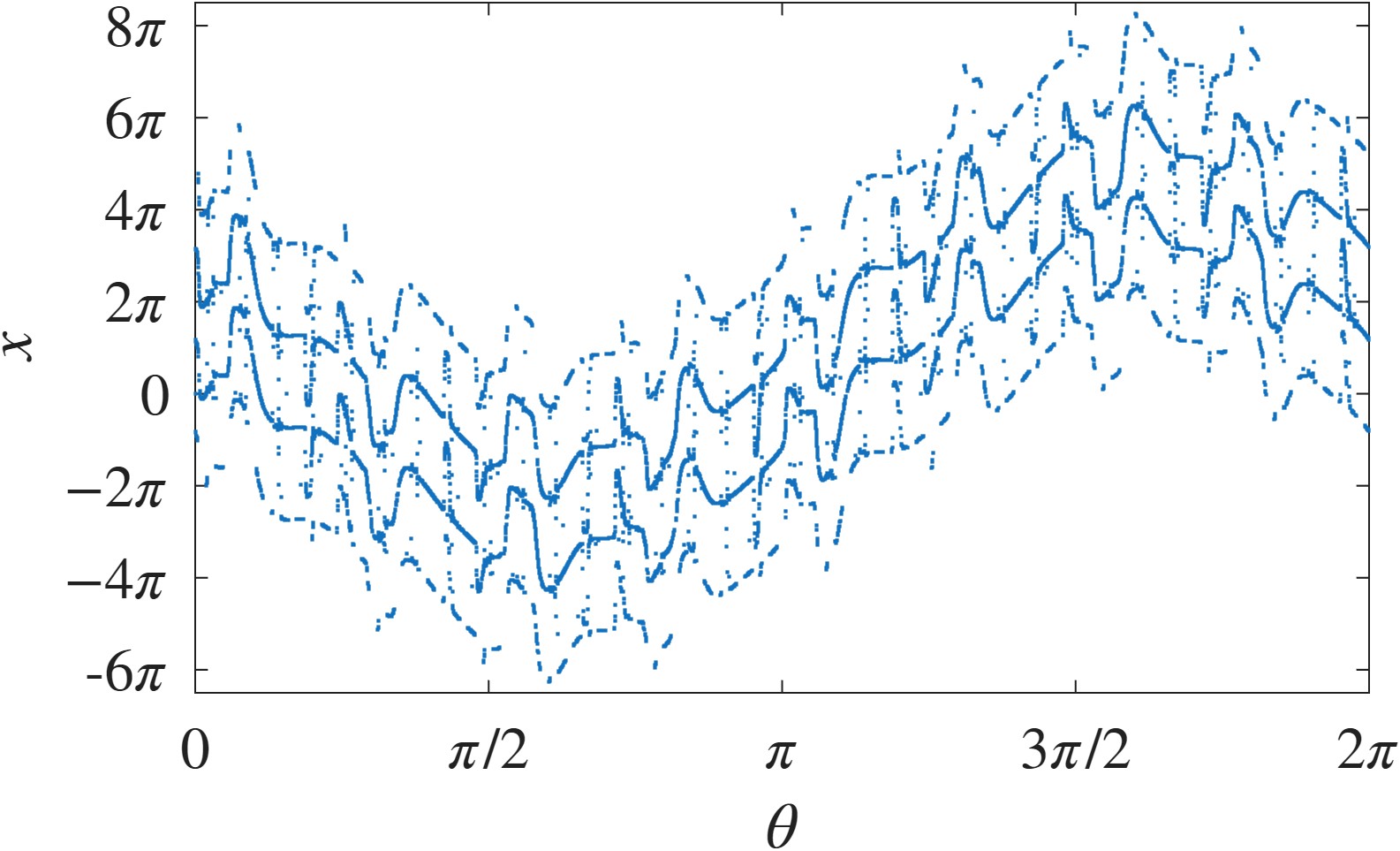}
 \caption{For the quasiperiodically forced Arnold circle map~(\ref{Eq:qpfAmap}) with  $k=0.8$, $\omega=0$, $\alpha=\frac{\sqrt{5}-1}{2}$, there appear a blue attractor invariant curve and a red repeller one for $\displaystyle\frac{b}{2\pi}=1.1$, and a possible SNA for $\displaystyle\frac{b}{2\pi}=3.1$.}
 \label{Fig:InvCurveAndSNA}
 \end{figure}
 
When $\varepsilon$ is small, it is possible to connect the sign of
$\LE$ with the sign of the derivative of $\Phi$ at a zero
$\displaystyle c^\ast=1/(2\pi)\int^{2\pi}_0\psi$ of $\Phi$. In fact,
 \[
 \LE=\frac\varepsilon{2\pi}\int^{2\pi}_0
 F_r(\psi_{c^\ast}(\theta),\theta;\varepsilon) \diff \theta+
 \Or{\varepsilon^2}
 \]
while
 \begin{eqnarray*}
 \Phi'(c^\ast)&=&\frac{1}{2\pi}\int^{2\pi}_0
 F_r(\psi_{c^\ast}(\theta),\theta;\varepsilon)
 \left.\Dc \psi\right|_{c=c^\ast}(\theta)\, \diff \theta\\
 &=&\frac{1}{2\pi}\int^{2\pi}_0
 F_r(\psi_{c^\ast}(\theta),\theta;\varepsilon) \diff \theta
 +\Or{\varepsilon}\, .
\end{eqnarray*}
In the last identity we used the estimate $\dc \psi (\theta
)=1+\Or{\varepsilon}$ (see~(\ref{otra})). We can continue a step
further and, assuming that $F$ is smooth with respect to
$\varepsilon$, prove that $\psi$ is smooth in $\varepsilon$. The
Implicit Function Theorem allows us to obtain the following
existence result for the case of hyperbolic invariant curves. (More
on this can be found in section~\ref{SubsecAveraging}.)

\begin{thm}
Given the mapping~\textup{(\ref{Eq:FE})}, consider the function
 \[
 \Phi_0(c):=\frac1{2\pi}\int^{2\pi}_0 F(c,\theta;0) \diff \theta\, .
 \]
Then each simple zero $c^\ast$ of $\Phi_0$ gives rise, for
$\varepsilon $ small enough, to a unique hyperbolic invariant curve
of the mapping~\textup{(\ref{Eq:FE})}, whose Lyapunov exponent is
given by $\LE=\varepsilon\Phi'_0 (c^{\ast} ) + \Or{\varepsilon^2}$.
\end{thm}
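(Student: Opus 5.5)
The plan is to reduce the statement to a one-dimensional implicit function problem for the bifurcation function $\Phi(c,\varepsilon)$ of (\ref{Eq:BifFunc}), written now with its dependence on $\varepsilon$ made explicit. By Theorem~\ref{Thm:Main} and the discussion preceding (\ref{Eq:BifFunc}), for $0\le\varepsilon<\varepsilon^\ast$ the invariant curves of (\ref{Eq:FE}) are exactly the translated curves $r=\psi_c(\theta)$ with $\Phi(c,\varepsilon)=0$. Since $\psi_c=c+\Or{\varepsilon}$ uniformly in $\theta$, we get
\[
\Phi(c,\varepsilon)=\frac1{2\pi}\int_0^{2\pi}F(c+\Or{\varepsilon},\theta;\varepsilon)\diff\theta=\Phi_0(c)+\Or{\varepsilon},
\]
uniformly in $c$, using the bounds (\ref{Eq:boundF}). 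In the same way, $\dc\psi=1+\Or{\varepsilon}$ by (\ref{otra}) and property~(i) of Remark~\ref{sobolev}, and $F_r$ is Lipschitz. Hence
\[
\dc\Phi(c,\varepsilon)=\frac1{2\pi}\int_0^{2\pi}F_r(\psi_c(\theta),\theta;\varepsilon)\,\dc\psi_c(\theta)\diff\theta=\Phi_0'(c)+\Or{\varepsilon},
\]
uniformly in $c$.

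The main obstacle is that we only know $\Phi$ is $C^1$ in $c$ and not yet smooth in $\varepsilon$. Rather than proving differentiability of $\psi$ in $\varepsilon$, I would avoid it with a uniform version of the implicit function theorem, since the estimates above suffice. Take $c^\ast$ with $\Phi_0(c^\ast)=0$ and $\Phi_0'(c^\ast)=\beta\ne0$, say $\beta>0$. Choose $\eta>0$ with $\Phi_0'\ge 3\beta/4$ on $[c^\ast-\eta,c^\ast+\eta]$. For $\varepsilon$ small, $\dc\Phi(\cdot,\varepsilon)\ge\beta/2$ on this interval, so $\Phi(\cdot,\varepsilon)$ is strictly increasing there. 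Moreover $\Phi(c^\ast\pm\eta,\varepsilon)=\Phi_0(c^\ast\pm\eta)+\Or{\varepsilon}$ have opposite signs, because $\abs{\Phi_0(c^\ast\pm\eta)}\ge 3\beta\eta/4$. Hence there is exactly one zero $c^\ast(\varepsilon)$ in the interval. The mean value theorem, applied to $\Phi(\cdot,\varepsilon)$ with $\abs{\Phi(c^\ast,\varepsilon)}=\Or{\varepsilon}$, gives $c^\ast(\varepsilon)=c^\ast+\Or{\varepsilon}$. Uniqueness is understood as follows: any invariant curve whose average lies in this neighborhood is $\psi_{c^\ast(\varepsilon)}$, by the uniqueness part of Theorem~\ref{Thm:Main}. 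If smoothness of $F$ in $\varepsilon$ is available, one may instead apply the classical implicit function theorem to get a $C^1$ branch, but this is not needed.

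For hyperbolicity and the exponent, use the formula derived before the statement:
\[
\LE=\frac{1}{2\pi}\int_0^{2\pi}\log\bigl(1+\varepsilon F_r(\psi(\theta),\theta;\varepsilon)\bigr)\diff\theta
=\frac{\varepsilon}{2\pi}\int_0^{2\pi}F_r(\psi(\theta),\theta;\varepsilon)\diff\theta+\Or{\varepsilon^2},
\]
with $\psi=\psi_{c^\ast(\varepsilon)}=c^\ast+\Or{\varepsilon}$. Replacing $F_r(\psi(\theta),\theta;\varepsilon)$ by $F_r(c^\ast,\theta;0)$ costs $\Or{\varepsilon}$ inside the integral, since $F_r$ is Lipschitz in $(r,\varepsilon)$. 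Because $\frac1{2\pi}\int_0^{2\pi}F_r(c^\ast,\theta;0)\diff\theta=\Phi_0'(c^\ast)$, this gives $\LE=\varepsilon\Phi_0'(c^\ast)+\Or{\varepsilon^2}$. For $\varepsilon>0$ small this is nonzero, with the sign of $\Phi_0'(c^\ast)$. By Lemma~\ref{Lem:Fourier} and the subsequent discussion, the curve is therefore an attractor if $\Phi_0'(c^\ast)<0$ and a repeller if $\Phi_0'(c^\ast)>0$; in particular it is hyperbolic.
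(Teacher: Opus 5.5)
Your proof is correct, but it continues the zero of $\Phi$ by a different mechanism than the paper. The paper only sketches this step. It asserts that $\psi$, hence $\Phi$, can be shown to be smooth in $\varepsilon$, and then applies the classical Implicit Function Theorem at $(c^\ast,0)$, using $\Phi(c,0)=\Phi_0(c)$. It links the exponent to $\Phi'(c^\ast)$ through $\LE=\frac{\varepsilon}{2\pi}\int_0^{2\pi}F_r(\psi_{c^\ast}(\theta),\theta;\varepsilon)\diff\theta+\Or{\varepsilon^2}$ and $\partial_c\psi=1+\Or{\varepsilon}$.

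You avoid any regularity in $\varepsilon$. You combine three ingredients:
\begin{itemize}
\item the estimates $\Phi=\Phi_0+\Or{\varepsilon}$ and $\partial_c\Phi=\Phi_0'+\Or{\varepsilon}$, uniform in $c$;
\item the differentiability of $\Phi(\cdot,\varepsilon)$ for each fixed $\varepsilon$, which Theorem~\ref{Thm:Main} already provides;
\item monotonicity and the intermediate value theorem.
\end{itemize}
Together these give existence and local uniqueness of the zero. The bound $c^\ast(\varepsilon)=c^\ast+\Or{\varepsilon}$ is exactly what the $\Or{\varepsilon^2}$ estimate on $\LE$ needs. Your argument is therefore self-contained relative to what the paper actually proves. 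The paper's route, once differentiability in $\varepsilon$ is established, gives more: a $C^1$ branch $\varepsilon\mapsto c^\ast(\varepsilon)$ and the possibility of higher-order expansions.

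A few minor points:
\begin{itemize}
\item The uniformity in $c$ holds because the constants of Lemma~\ref{Lem:Herman} depend only on $a_\pm$, $\alpha$, and quantities bounded in terms of $k_F^{(3)}$ and $\Lnorm{\DD\psi_c}\le 1$.
\item The uniform bound $\partial_c\psi=1+\Or{\varepsilon}$ follows from \eqref{otra} via $\abs{u(\theta)}\le\int_0^{2\pi}\abs{\D u}$ for zero-mean $u\in H^1(\Tset)$, applied to $u=\partial_c\psi_c-1$. It does not follow from property~(i), which is stated for $H^2$.
\item Identifying invariant curves with zeros of $\Phi$ requires $\varepsilon>0$, since $\lambda_c=-\varepsilon\Phi(c)$.
\item Uniqueness, as in the paper, is understood within the class of curves given by Theorem~\ref{Thm:Main}.
\end{itemize}
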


\begin{rem}
Of course, multiple zeros of $\Phi_0$ of odd order give
also rise, for $\varepsilon $ small enough, to invariant curves of
the mapping~(\ref{Eq:FE}), but the uniqueness and the hyperbolicity
character may not be preserved. In the case of a function $F$ depending
$2\pi$-periodically on the variable $x$ and with zero mean value
(like the quasiperiodically forced Arnold circle map~(\ref{Eq:qpfAmap})),
$\Phi_0$ will typically have at least two simple zeros,
giving rise at least to an attractor and a repeller for
the mapping~(\ref{Eq:FE}).
\end{rem}
\section{Final remarks}
\label{Sec:FinalRem}

\subsection{Case of a rational $\alpha$}
\begin{figure}[ht]
\centering
\includegraphics[width=0.6\textwidth]{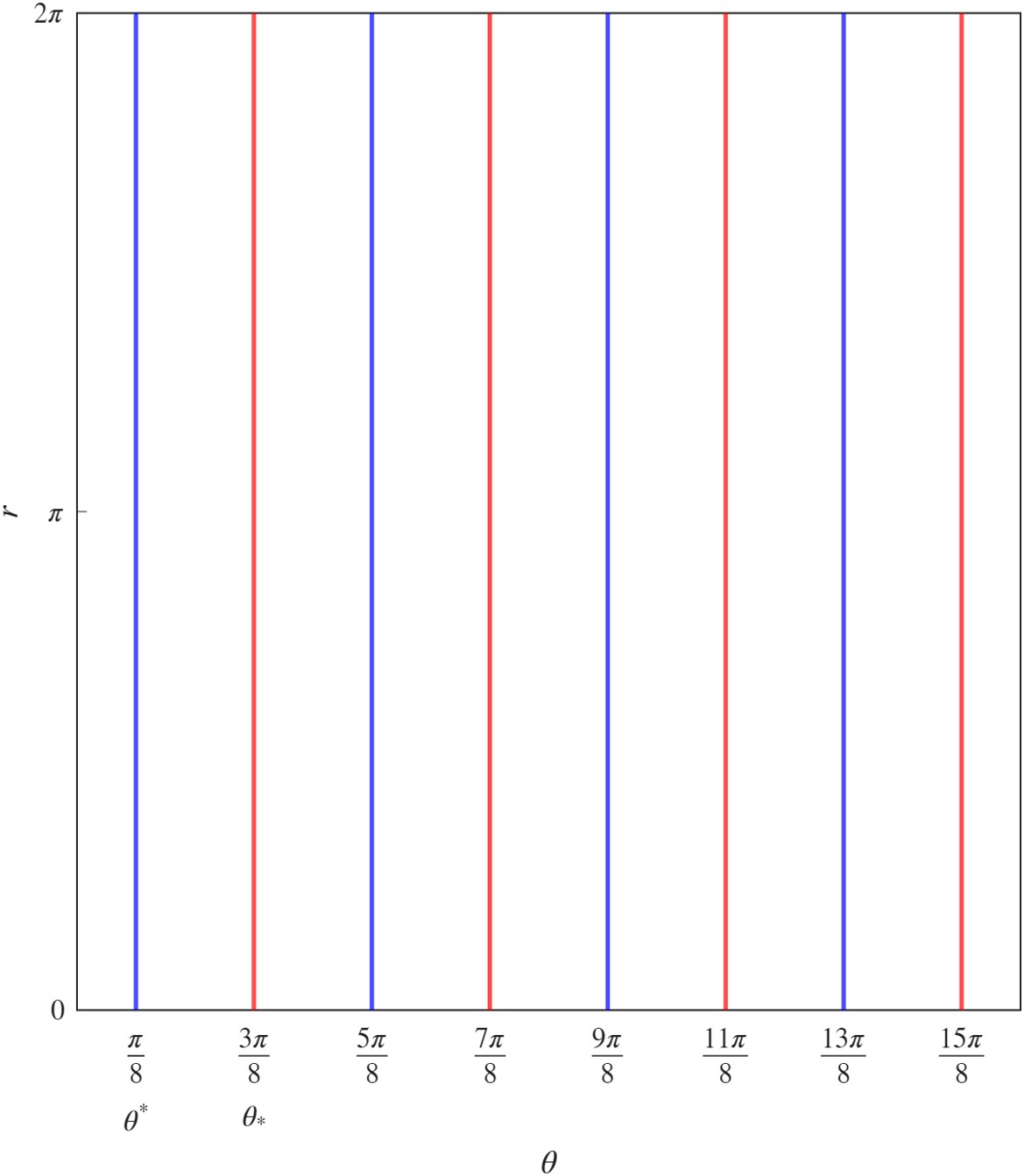}
\caption{Radial action for $F(r,\theta)=(1+\sin^2r)\sin 4\theta$
on $\theta=\frac{\pi}{8},\dots, \frac{15\pi}{8}$. The dynamics is upwards on the set of blue lines, and downwards on the red ones.}
\label{Fig:RatAlpha}
\end{figure}
We give an example showing that the existence of translated curves
can fail when $\alpha\in\Qset $. When $\alpha$ is of Liouville type
there are examples in~\cite{KatokH95} for $F=F(\theta)$, as was
already mentioned in section~\ref{Sec:translated}.

Assume $\alpha=p/q$ in lowest terms and that there exist two angles
$\theta^\ast$, $\theta_\ast$ such that
\[
F(r,\theta_\ast+2\pi k\alpha)<0\, ,
\quad F(r,\theta^\ast+2k \pi\alpha)>0\, ,\quad\forall\, r\in\Rset \, ,
k=0,1,\dots,q-1.
\]
For example, consider $F(r,\theta)=(1+\sin^2r)\sin q \theta$
(see figure~\ref{Fig:RatAlpha}).

Adding from $k=0$ to $q-1$ in the functional
equation~(\ref{Eq:FuncEq}) we are led to
 \begin{eqnarray*}
 0&=&\sum^{q-1}_{k=0}[\psi(\theta+2\pi(k+1)\alpha)
 -\psi(\theta+2\pi k\alpha)]\\ &=&\varepsilon\sum^{q-1}_{k=0}
 F(\psi(\theta+2\pi k\alpha),\theta+2\pi k\alpha;\varepsilon)+q
 \lambda\, .
 \end{eqnarray*}
Letting $\theta=\theta_\ast$ and $\theta=\theta^\ast$
we find a contradiction on the sign of $\lambda$.

\subsection{On the numbers $\nu_n$}
\label{SubsecNun} The value $\varepsilon^\ast$ in
Theorem~\ref{Thm:Main} about translated curves can be determined
(although not optimally). For this, it would be enough to use a
quantitative version of lemma~\ref{Lem:Herman} as in~\cite{Herman86}.

To justify the need of the numbers $\nu_n$ in the iteration
scheme~(\ref{Eq:DefPsin}) assume that, at some step $n$, we can find
a positive function $b(\theta)$ such that
\begin{eqnarray*}
1+\varepsilon F_r(\psi_n(\theta),\theta;\varepsilon)
=b(\theta)/b(\theta+2\pi\alpha),\\
\int^{2\pi}_0 F_\theta(\psi_n(\theta),\theta;\varepsilon)\,
b(\theta+2\pi\alpha)\diff \theta\not=0\, .
\end{eqnarray*}
Then we cannot solve
\begin{eqnarray*}
\D \psi _{n+1}(\theta+2\pi\alpha)
&=&\Bigl(1+\varepsilon F_r(\psi_n(\theta),\theta;\varepsilon)\Bigr)
\D \psi _{n+1}(\theta)\\
&&\mbox{}+\varepsilon F_\theta(\psi_n(\theta),\theta;\varepsilon),
\end{eqnarray*}
because if we multiply  by $b(\theta+2\pi\alpha)$ and integrate over
a period we arrive at a contradiction:
\begin{eqnarray*}
0&=&\int^{2\pi}_0 b(\theta+2\pi\alpha)
\D \psi_{n+1}(\theta+2\pi\alpha)\diff \theta
-\int^{2\pi}_0 b(\theta)\D \psi_{n+1}(\theta) \diff \theta\\
&=&\varepsilon\int^{2\pi}_0
b(\theta+2\pi\alpha)\, F_\theta(\psi_n(\theta),\theta;\varepsilon)\diff \theta\neq 0
\, .
\end{eqnarray*}

\subsection{An alternative approach to the existence of invariant curves}
\label{SubsecAveraging} Assume that $F$ is smooth in $\varepsilon$
and define the average of $F$ as
$\overline{F}(r;\varepsilon)=1/(2\pi)\int^{2\pi}_0
F(r,\theta;\varepsilon)\diff \theta$ and $\widetilde F=
F-\overline{F}$. Let $H=H(r,\theta;\varepsilon)$ be a solution of
the small divisors equation
\begin{equation}
H(r,\theta+2\pi\alpha;\varepsilon)-H(r,\theta;\varepsilon)
+\widetilde F(r,\theta;\varepsilon)=0\, .
\label{Eq:SmallDiv}
\end{equation}
The change of variables $\rho=r+\varepsilon H(r,\theta;\varepsilon)$
transforms the mapping $f_\varepsilon$ given in~(\ref{Eq:FE}) into
\[
\begin{array}{l}
\rho_1=\rho+\varepsilon\bar
F(\rho,\varepsilon)+\Or{\varepsilon^2}\\
\theta_1=\theta+2\pi\alpha\, .\end{array}
\]
Let $\rho^\ast\in\Rset $ be such that $\overline{F}(\rho^\ast)=0$
and $\overline{F}'(\rho^\ast)\not=0$.
Then one can apply the theory of normally hyperbolic
invariant manifolds, with $\Lambda:\rho=\rho^\ast$, and obtain an
invariant curve $\Lambda_\varepsilon$ for small $\varepsilon$. This
approach requires hyperbolicity while ours does not. For example,
in the map $f_{\varepsilon,0,\omega_1}$ given in~(\ref{Eq:FEOmega})
there is no hyperbolicity if $\omega_1=\omega_\ast$ or $\omega^\ast$.
We notice  that in this approach one also needs arithmetic conditions
on $\alpha$, they appear when one tries to solve
equation~(\ref{Eq:SmallDiv}) equation for $H$. Typically,
the function $H$ will be less smooth than $F$.

\subsection{The mapping on the torus}

Assume that, as in the quasiperiodically forced Arnold circle
map~(\ref{Eq:qpfAmap}), $F$ is $2\pi$-periodic also in $r$.
Then we can think of $f_\varepsilon$ as a diffeomorphism of
the torus $\Tset ^2=\Tset \times\Tset $. Now the
translated curves will satisfy $\psi_{c+2\pi}=\psi_c+2\pi$ but still
they are a foliation of $\Tset ^2$. Invariant curves appear when the
translation number $\lambda$ is in $2\pi\Zset $. Following section 5 of
\cite{Herman83} we can define the fibred rotation number of
$f_\varepsilon$ by
\[
\rho=\lim_{n\to\infty}\frac1n(r_n-r_0)\quad\mbox{where}\quad
(r_n,\theta_n)=f^n_\varepsilon (r_0,\theta_0)\, .
\]
This number is independent of $(r_0,\theta_0)$ and it only depends
on the lift of $f_\varepsilon$. Let $r=\psi(\theta)$ be an
invariant curve with
\[
\psi(\theta+2\pi\alpha)=\psi(\theta)
+\varepsilon F(\psi(\theta),\theta;\varepsilon)+2\pi N\, .
\]
Then $\rho=-2\pi N$ and the dynamics on these curves is called
\emph{mode-locked}. When $\varepsilon$ is small, the only
possibility is $N=0$.


Let us now consider $f_{\varepsilon,\omega_0,\omega_1}$ as given
in~(\ref{Eq:FEOmega}). For each $N\in\Zset $, imposing $\omega_0=N$,
we find a compact interval $I_N=[\omega_\ast(N),\omega^\ast(N)]$
such that $f_{\varepsilon,\omega_1}$ has an invariant curve with
$\lambda=2\pi N$ if and only if
$\omega_1\in[\omega_\ast(N),\omega^\ast(N)]$. These intervals must
be disjoint and it seems that on the torus the invariant curves
could present a phenomenon of intermittency with respect to the
parameter $\omega_1 \in \Tset$.


\subsection{Basin of attraction}
Let $r=\psi(\theta)$ be an invariant curve with $\LE<0$. We know
from section \ref{Sec:ExistStab} that this curve is a local attractor.
Can we say that the boundary of the region of attraction is made
of invariant curves? This is unclear. In principle we cannot exclude
the existence of invariant sets which are not invariant curves.


This will probably happen when $\alpha$ is a Liouville number and
there are no invariant curves, but it might also happen when
$\alpha$ is of constant type. These invariant sets would probably
be the closure of graphs of $r=\psi(\theta)$ with $\psi$ a
discontinuous solution of the functional
equation~(\ref{Eq:FuncEq}). This is reminiscent of Aubry-Mather
sets and also of some of the phenomena described in section 4 of
\cite{Herman83}.

\section*{Acknowledgments}
We are grateful to R.~de la Llave for several comments and suggestions.
A.D. is partially supported by the grant PID-2024-158570NB-100 funded by MCIN/\allowbreak AEI/\allowbreak 10.13039/\allowbreak 501100011033 and ``ERDF A
way of making Europe''. R.O. is partially supported by the grant  PID2021-128418NA-I00. The first version was prepared while both
authors were visitors to the \emph{Centre de Recerca Matem\`{a}tica}, for
whose hospitality they are very grateful.
\bibliographystyle{alpha}
\bibliography{qpfcm}
\end{document}